\pgfplotsset{compat=1.10}
\renewcommand{\PrintDOI}[1]{\href{http://dx.doi.org/\detokenize{#1}}{doi: \detokenize{#1}}%
	\IfEmptyBibField{pages}{, (to appear in print)}{}}
\def\commutatif{\ar@{}[rd]|{\circlearrowleft}}
\newtheorem{thm}{Theorem}[section]
\theoremstyle{definition}
\newtheorem{defn}[thm]{Definition}
\newtheorem{qst}{Question}
\theoremstyle{remark}
\newtheorem{rmk}[thm]{Remark}
\newtheorem{ex}[thm]{Example}
\title{On a dichotomy for skyscraper sheaves under the Bridgeland-King-Reid equivalence}
\author{Boris Tsvelikhovskiy} 
\address{Department of Mathematics, University of California, Riverside, CA 92521, USA} 
\email{borist@ucr.edu}
\begin{document}

\begin{abstract} 
Let $G \subset SL_3(\mathbb{C})$ be a finite abelian subgroup, and let $Y = G\operatorname{-}\mathrm{Hilb}(\mathbb{C}^3)$ be the corresponding $G$-Hilbert scheme. Denote by 
\[
\Psi: D^b_G(\mathrm{Coh}(\mathbb{C}^3)) \;\to\; D^b(\mathrm{Coh}(Y))
\]
the Bridgeland--King--Reid derived equivalence. For a nontrivial character $\chi$ of $G$, let $\chi^!$ be the corresponding skyscraper sheaf supported at the origin. It is known that $\Psi(\chi^!)$ is always a pure sheaf supported either in degree $0$ or in degree $-1$. We prove that the proportion of characters $\chi$ for which $\Psi(\chi^!)$ is supported in degree $0$ is a rational number lying between $0.25$ and $1$, with both bounds being sharp. Moreover, we exhibit families of resolutions for which these proportions attain certain explicit values within this range.
\end{abstract}

\maketitle
\section{Introduction}
We start with a brief reminder on different versions of McKay correspondence. Let $G\subset GL_n(\mathbb{C})$ be a finite subgroup and consider the categorical quotient $X=\mathbb{C}^n/G:=Spec(\mathbb{C}[x_1,x_2,\hdots,x_n]^G)$. We are interested in examples with the following properties 
\begin{itemize}
	\item $X$ has an isolated singularity at $0$;
	\item there exists a projective resolution $\rho: Y \rightarrow X$. 
\end{itemize}

The first way to understand McKay correspondence is via the bijection $$\left\{
\begin{aligned}
	&\mbox{nontrivial~irreducible}\\
	&\mbox{representations~of  }G
\end{aligned}\right\}\overset{1:1}{\longleftrightarrow}
\left\{
\begin{aligned}
	&	\mbox{irreducible ~components}\\
	&	\mbox{of ~the ~central~ fiber } \rho^{-1}(0)
\end{aligned}\right\}.$$

 Let $\mathrm{Coh}_G(\mathbb{C}^n)$ be the category of $G$-equivariant coherent sheaves on $\mathbb{C}^n$ and $\mathrm{Coh}(Y)$
be the category of coherent sheaves on $Y$. In modern language the McKay correspondence is usually understood as an equivalence of triangulated categories $D_G^b(\mathrm{Coh}(\mathbb{C}^n))$ and $D^b(\mathrm{Coh}(Y))$. Such an equivalence is known to hold in the following cases: 
\begin{itemize}
	\item $G\subset SL_2{(\mathbb{C})}$, any $G$ (\cite{KV});
	\item $G\subset SL_3{(\mathbb{C})}$, any $G$, $Y=G\operatorname{-}\mathrm{Hilb}(\mathbb{C}^3)$ (\cite{BKR});
	\item $G\subset SL_3{(\mathbb{C})}$, any abelian $G$ and any crepant resolution $(Y,\rho)$ (\cite{CI});
	\item  $G\subset SP_{2n}(\mathbb{C})$, any crepant symplectic resolution  $(Y,\rho)$ (\cite{BK});
	\item $G\subset SL_n{(\mathbb{C})}$, any abelian $G$,  any crepant symplectic resolution $(Y,\rho)$ (\cite{KawT}).
\end{itemize}

Any finite-dimensional representation $V$ of $G$ gives rise to two equivariant sheaves on
$\mathbb{C}^n$: 
\begin{itemize}
	\item the skyscraper sheaf $V^!=V\otimes_{\mathbb{C}} \mathcal{O}_{0}$, whose fiber at $0$ is $V$ and all the other fibers vanish;
	\item  the locally free sheaf $\widetilde{V}=V\otimes_{\mathbb{C}} \mathcal{O}_{\mathbb{C}^n}$.
\end{itemize}

 Suppose the map $\Psi: D^b_G(\mathrm{Coh}(\mathbb{C}^n))\overset{\sim}{\rightarrow }D^b(\mathrm{Coh}(Y))$ is an exact equivalence of triangulated categories.

A natural question arises in this context.  

\begin{qst}
	What are the images of $\widetilde{\chi}$ and $\chi^!$ (for nontrivial irreducible representations $\chi$ of $G$) under the equivalence $\Psi$?
\end{qst}

The first case is well understood: the object $\Psi(\widetilde{\chi})$ is a vector bundle of rank $\dim(\chi)$, called a \emph{tautological bundle} or \emph{GSp--V sheaf} (after Gonzales--Sprinberg and Verdier, see \cite{GSV}).  
Much less is known about $\Psi(\chi^!)$. The following results, due to Kapranov and Vasserot \cite{KV}, and Cautis--Craw--Logvinenko \cite{CCL}, provide some insight:

\begin{enumerate}
	\item If $G\subset SL_2(\mathbb{C})$ is a finite subgroup and $\chi$ is a nontrivial character, then 
	\[
	\Psi(\chi^!) \simeq \mathcal{O}_{\mathbb{P}^1_{\chi}}(-1)[1].
	\]

	\item If $G\subset SL_3(\mathbb{C})$ is a finite abelian subgroup, then for any nontrivial character $\chi$, the object $\Psi(\chi^!) \in D^b(\mathrm{Coh}(Y))$ is \emph{pure} and concentrated in degrees $0$ and $-1$.  
	Here $Y=G\!\operatorname{-}\mathrm{Hilb}(\mathbb{C}^3)$, and an object is called \textbf{pure} if all of its cohomology groups vanish except in a single degree.
\end{enumerate}

The dichotomy highlighted in (2) motivates the following definition.  

\begin{defn}
    Let $\mathfrak{H}_0$ denote the set of nontrivial characters $\chi$ such that $H^0(\Psi(\chi^!))\neq 0$.  
    Define
    \[
        \mathcal{B}_0 := \frac{|\mathfrak{H}_0|}{r-1},
    \]
    the proportion of nontrivial characters $\chi$ for which $\Psi(\chi^!)$ is concentrated in degree $0$.
\label{ShareDefn}
\end{defn}

In this paper we investigate the possible values of $\mathcal{B}_0$.  
A partial answer is provided by Theorem~\ref{MainThm}, which shows that $\mathcal{B}_0$ always lies between $0.25$ and $1$, and that both bounds are sharp. Moreover, we show that for any $k\in \mathbb{Z}_{\geq 2}$ there exists a finite abelian subgroup $G\subset SL_3(\mathbb{C})$ such that
\[
	\mathcal{B}_0=\frac{k-1}{2k-1} 
	\qquad \text{or} \qquad 
	\mathcal{B}_0=\frac{k-1}{2k},
\]
(see Example~\ref{example}).

The paper is organized as follows. In Section~2, we collect basic facts on the McKay correspondence and $G$-Hilbert schemes, with particular emphasis on the central result of \cite{CCL}, which provides a classification of $\chi$ in terms of the cohomology groups $H^{i}(\Psi(\chi^!))$. Section~3 contains the statement and proof of our main result (Theorem~\ref{MainThm}). Finally, in Section~4 we discuss potential directions for further research and related open problems.

	%\textbf{Acknowledgement:} I am grateful to Roman Bezrukavnikov and Timothy Logvinenko for enlightening discussions. I would like to thank Timothy Logvinenko for drawing my attention to derived Reid's recipe and explaining how it works. I am indebted to Michael Finkelberg and Olivier Schiffmann for valuable suggestions.  
\section{McKay correspondence}
We start with a quick chronological overview of the subject. In \cite{McKay} John McKay has observed that  for
a finite subgroup $G\subset SL_2(\mathbb{C})$ there is a bijection

$$\left\{
\begin{aligned}
&\mbox{nontrivial~irreducible}\\
&\mbox{representations~of  }G
\end{aligned}\right\}\overset{1:1}{\longleftrightarrow}
\left\{
\begin{aligned}
&	\mbox{irreducible ~components}\\
&	\mbox{of ~the ~central~ fiber } \rho^{-1}(0)
\end{aligned}\right\},$$

where $\rho:Y\rightarrow \mathbb{C}^2/G$ is the minimal resolution of singularities. Notice that $\mathbb{C}[G]$ (the ring of representations of $G$) is naturally isomorphic to $K^G(\mathbb{C}^2)$, the Grothendieck group of
$G$-equivariant coherent sheaves on $\mathbb{C}^2$. Following this observation, in \cite{GSV}, the McKay correspondence was realized
geometrically  as an isomorphism of Grothendieck groups $K_G(\mathbb{C}^2)\rightarrow K(Y)$. Next in \cite{KV} this isomorphism was lifted to an equivalence of triangulated categories of coherent sheaves: $$D_G(\mathrm{Coh}(\mathbb{C}^2))\rightarrow D(\mathrm{Coh}(Y)).$$ In particular under this equivalence, $\chi^!:=\chi\otimes\mathcal{O}_0$,
the skyscraper sheaf at $0$ associated to a nontrivial irreducible $G$-representation $\chi$, is mapped to the structure sheaf
of the corresponding exceptional divisor (twisted by $\mathcal{O}(-1)$). In $2001$ Bridgeland, King and Reid constructed the equivalence $D_G(\mathrm{Coh}(\mathbb{C}^3))\rightarrow D(\mathrm{Coh}(Y))$ for any finite subgroup $G\subset SL_3(\mathbb{C})$ and $Y = G\operatorname{-}\mbox{Hilb}(\mathbb{C}^3)$ (see \cite{BKR}). It was shown that $G\operatorname{-}\mbox{Hilb}(\mathbb{C}^3)$  is a crepant resolution of $\mathbb{C}^3$.  It was established that the images of $\chi^!$s are concentrated in a single degree in case $G$ is abelian and $\mathbb{C}^3/G$ has a single isolated singularity (see \cite{CL}), and for any finite abelian subgroup
of $SL_3(\mathbb{C})$ in \cite{CCL}. We briefly recall the setup.

\subsection{$G\operatorname{-}\mbox{Hilb}(\mathbb{C}^3)$ as a toric variety}  

For a detailed introduction to the construction of $G$-Hilbert scheme as a toric variety we refer the reader to Section~2 of \cite{C1}; see also Section~2 of \cite{Tsv}.

Let $G\subset SL_3(\mathbb{C})$ be a finite abelian subgroup of order $r=|G|$, and let $\zeta=e^{2\pi i/r}$ be a primitive root of unity. After diagonalizing the action of $G$, we denote the corresponding coordinates on $\mathbb{C}^3$ by $x,y,z$.  

The lattice of exponents of Laurent monomials in $x,y,z$ is 
\(
L=\mathbb{Z}^3,
\)
with dual lattice $L^{\vee}$. For each group element $g=\mathrm{diag}(\zeta^{\gamma_1},\zeta^{\gamma_2},\zeta^{\gamma_3})$, we associate the vector 
\[
v_g=\frac{1}{r}(\gamma_1,\gamma_2,\gamma_3).
\]
Define
\[
N := L^{\vee} + \sum_{g\in G}\mathbb{Z}\cdot v_g, \qquad N_{\mathbb{R}}=N\otimes_{\mathbb{Z}}\mathbb{R},
\]
and let $M := \mathrm{Hom}(N,\mathbb{Z})$ be the dual lattice of $G$-invariant Laurent monomials. The categorical quotient
\[
X = \mathrm{Spec}\,\mathbb{C}[x,y,z]^G
\]
is the toric variety 
\[
\mathrm{Spec}\,\mathbb{C}[\sigma^{\vee}\cap M],
\]
where the cone $\sigma$ is the positive octant $\sigma=\mathbb{R}_{\geq 0}e_i\subset N_{\mathbb{R}}$.  

\begin{defn}
The \textbf{junior simplex} $\triangle \subset N_{\mathbb{R}}$ is the triangle with vertices $e_x=(1,0,0)$, $e_y=(0,1,0)$, and $e_z=(0,0,1)$. It contains the lattice points $\tfrac{1}{r}(\gamma_1,\gamma_2,\gamma_3)$ satisfying $\gamma_1+\gamma_2+\gamma_3=1$, $\gamma_i\geq 0$. 
\end{defn}

A subdivision of the cone $\sigma$ determines a fan $\Sigma$, and hence a toric variety $X_\Sigma$, together with a birational morphism $X_\Sigma\to X$.  

If the fan $\Sigma$ corresponds to a partition of the junior simplex into basic triangles, then the induced map $X_\Sigma\to X$ is a crepant resolution of singularities. In this case, the fan $\Sigma$ is uniquely determined by the triangulation of the junior simplex into basic triangles; by slight abuse of notation we will also refer to this triangulation itself as $\Sigma$.  

\begin{defn}
A \textbf{$G$-cluster} is a $G$-invariant zero-dimensional subscheme $Z\subset \mathbb{C}^n$ such that $H^0(\mathcal{O}_Z)$ is isomorphic, as a $\mathbb{C}[G]$-module, to the regular representation of $G$. The \textbf{$G\operatorname{-}$Hilbert scheme} is the variety $Y=G\operatorname{-}\mathrm{Hilb}(\mathbb{C}^n)$, which serves as the fine moduli space parameterizing $G$-clusters.
\end{defn}

The $G\operatorname{-}$Hilbert scheme is itself a toric variety. For any finite subgroup $G\subset SL_2(\mathbb{C})$ or $SL_3(\mathbb{C})$, the natural morphism $Y\to X$ is a crepant resolution of singularities (see \cite{KV,BKR}). The partition of the junior simplex into basic triangles, which determines the fan of $Y$, can be computed via the three-step procedure described in \cite{CR} (see also \cite{Tsv}).

\subsection{Known results for abelian subgroups $G\subset SL_3(\mathbb{C})$}

Let $G\subset SL_n({\mathbb{C}})$ be a finite subgroup and $\mathcal{Z}\subset Y\times \mathbb{C}^n$ denote the universal subscheme.

The following result appeared in the celebrated paper of Bridgeland, King and Reid (see \cite{BKR}).

$\mathbf{R}p_*(q^*(\bullet)\overset{\mathbf{L}}{\otimes}\mathcal{O}_{\mathcal{Z}})$
\[\begin{tikzcd}
	& {\mathcal{Z}} \\
	Y && {\mathbb{C}^n}
	\arrow["p"', from=1-2, to=2-1]
	\arrow["q", from=1-2, to=2-3]
\end{tikzcd}\]
\begin{thm}
	Let $G\subset SL_n{\mathbb{C}}$ be a finite subgroup with $n\leq 3$.
	\begin{enumerate}
		\item The variety $G\operatorname{-}\mbox{Hilb}(\mathbb{C}^n)$ is irreducible and the resolution $Y\rightarrow X$ is crepant.
		\item The map $\Psi: D^b_G(\mathrm{Coh}(\mathbb{C}^n))\overset{\sim}{\rightarrow }D^b(\mathrm{Coh}(Y))$ is an exact equivalence of triangulated categories.
	\end{enumerate}
\label{BKRmain}
\end{thm}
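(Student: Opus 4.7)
The plan is to follow the original approach of Bridgeland, King and Reid, in which assertion (2) drives (1): the functor $\Psi$ is constructed first as a Fourier--Mukai transform, shown to be an equivalence via a general criterion, and crepancy of $\rho$ is then deduced as a formal consequence of the equivalence combined with the triviality of $\omega_{\mathbb{C}^n}$.

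To begin, I would produce a large open subset of $Y$ on which everything is transparent. Let $U\subset\mathbb{C}^n$ be the free locus of the $G$-action; since $G\subset SL_n(\mathbb{C})$, its complement has codimension at least $2$. Each free $G$-orbit is itself a $G$-cluster, so there is an open embedding $U/G\hookrightarrow Y$ realizing an irreducible component of dimension $n$. Irreducibility of $Y$ for $n\leq 3$ then reduces to showing that every $G$-cluster, including those whose support meets the fixed-point locus, lies in this component. In the abelian case this follows from the toric description and the Ito--Nakamura enumeration of $G$-clusters by monomial ideals; in general one uses a scheme-theoretic deformation argument, with the length constraint $|Z|=|G|$ bounding the relevant tangent spaces.

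The core step is to verify the hypotheses of the BKR equivalence criterion for the transform $\Psi(\bullet)=\mathbf{R}p_*\bigl(q^*(\bullet)\overset{\mathbf{L}}{\otimes}\mathcal{O}_{\mathcal{Z}}\bigr)$. This criterion asserts that if $Y$ is smooth, the dimensions of source and target match, and the skyscraper sheaves $\mathcal{O}_y\in D^b(\mathrm{Coh}(Y))$ are sent to a family of pairwise-orthogonal objects, i.e.
\[
\operatorname{Hom}^i_G\!\bigl(\mathcal{O}_{Z_{y_1}},\,\mathcal{O}_{Z_{y_2}}\bigr)=0 \quad\text{for }y_1\neq y_2,
\]
together with the standard endomorphism normalization for $y_1=y_2$, then $\Psi$ is fully faithful; a Serre-duality argument then promotes fully faithful to an equivalence. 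The geometric input that makes the vanishings accessible is the dimension estimate $\dim(Y\times_X Y)\leq \dim Y+1$, which for $n\leq 3$ holds because the exceptional fibres of $\rho$ have dimension at most one. Orthogonality then splits into cases: when the supports of $Z_{y_1}$ and $Z_{y_2}$ are set-theoretically disjoint the vanishing is immediate, and otherwise one performs a local $G$-equivariant Ext calculation in which the defining property of a cluster, that $H^0(\mathcal{O}_{Z_y})$ is the regular representation, forces the required vanishing of $G$-invariants.

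Once $\Psi$ is known to be an equivalence, crepancy comes for free: the Serre functor on $D^b_G(\mathrm{Coh}(\mathbb{C}^n))$ is a pure shift by $n$ (as $\omega_{\mathbb{C}^n}$ is trivial and $G\subset SL_n(\mathbb{C})$ preserves a volume form), an equivalence transports Serre functors, and therefore $\omega_Y\simeq\mathcal{O}_Y$ and $\dim Y=n$. The hard part will be the orthogonality verification on the exceptional locus, where distinct clusters can share set-theoretic support; the careful accounting of how the regular-representation condition on $\mathcal{O}_{Z_y}$ constrains the local structure of $\mathcal{O}_{\mathcal{Z}}$ is precisely where the restriction $n\leq 3$ is used, since for $n\geq 4$ the analogous dimension bound on $Y\times_X Y$ can fail and $G\operatorname{-}\mathrm{Hilb}(\mathbb{C}^n)$ need not even be smooth.
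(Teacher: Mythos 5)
This theorem is not proved in the paper at all: it is quoted verbatim from Bridgeland--King--Reid \cite{BKR}, so there is no in-paper argument to compare against. Your proposal is therefore an attempt to reconstruct the original BKR proof, and while the overall architecture you describe (Fourier--Mukai kernel $\mathcal{O}_{\mathcal{Z}}$, orthogonality of the images of point sheaves, the dimension bound on $Y\times_X Y$, and crepancy via transport of Serre functors) matches the real argument, three points are off in ways that matter.

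First, the engine of the BKR proof is missing. You state the fully-faithfulness criterion in the form ``if $Y$ is smooth and the point objects are orthogonal, then $\Psi$ is fully faithful,'' but in \cite{BKR} the smoothness of $Y$ is a \emph{conclusion}, not a hypothesis: it is extracted, together with the concentration of $\Psi(\mathcal{O}_y)$ in a single degree, from the intersection theorem of commutative algebra (nonexistence of bounded complexes of free modules with small homological spread), applied after the dimension estimate $\dim(Y\times_X Y)\leq n+1$. Omitting this step leaves the argument circular, since a priori one only knows $Y$ is a component of a Hilbert scheme. Second, your dimension estimate is justified incorrectly: for $n=3$ the fibre of $\rho$ over the origin can be two-dimensional (it contains compact exceptional divisors whenever junior classes exist), so ``exceptional fibres have dimension at most one'' is false; the bound $\dim(Y\times_X Y)\leq n+1$ instead follows from a stratification count (fibres of dimension $2$ occur only over finitely many points, fibres of dimension $1$ over a locus of dimension at most $1$). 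Third, irreducibility of $G\operatorname{-}\mathrm{Hilb}(\mathbb{C}^n)$ is not obtained in \cite{BKR} by a deformation-theoretic or tangent-space argument on clusters --- such arguments are unreliable in dimension $3$, where Hilbert schemes of points are themselves reducible for large length --- but is deduced \emph{after} the equivalence is established for the distinguished component $Y$ containing the free orbits: any $G$-cluster gives a point-like object of $D^b_G(\mathrm{Coh}(\mathbb{C}^n))$, and the equivalence identifies it with a point of $Y$. So the logical order is: distinguished component $\Rightarrow$ equivalence and crepancy $\Rightarrow$ irreducibility, rather than irreducibility first as your sketch suggests.
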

Let $\chi$ be an irreducible representation  of $G$. There are two natural $G$-equivariant sheaves on $\mathbb{C}^n$ associated to $\chi$:
\begin{itemize}
	\item $\widetilde{\chi}:=\chi\otimes \mathcal{O}_{\mathbb{C}^n}$
	\item $\chi^!:=\chi\otimes\mathcal{O}_0$,
\end{itemize}
 where $\mathcal{O}_0=\mathcal{O}_{\mathbb{C}^n}/\mathfrak{m}_0= \mathbb{C}[x_1,x_2,\hdots,x_n]/(x_1,x_2,\hdots,x_n)$ is the structure sheaf of the origin in $\mathbb{C}^n$.

The image $\Psi(\widetilde{\chi}\otimes \mathcal{O}_{\mathbb{C}^n})$ admits a straightforward description. It is isomorphic to 
$L_\chi^{\vee}$, where $L_\chi$ is the corresponding tautological vector bundle of dimension dim$(\chi)$. This bundle is usually called a tautological or GSp-V sheaf (after Gonzales-Sprinberg and Verdier, see). The tautological vector bundles on $Y$ are defined as direct summands in the decomposition  $$p_*(\mathcal{O}_{\mathcal{Z}})=\bigoplus\mathcal{L}_\chi\otimes\chi,$$ with respect to the trivial $G$-action on $Y$.

However, the images of skyscraper sheaves $\chi^!$ are more complicated to describe. In case of abelian $G$ the main result of \cite{CCL} provides such a description.
\begin{thm}
\label{CCLthm}
Let $G\subset SL_3(\mathbb{C})$ be a finite abelian subgroup and let
$\chi$ be a nontrivial irreducible representation of $G$. Then $H^i(\chi^!)=0$ unless $i \in \{0,-1,-2\}$. Moreover, one of the following holds:
 {\renewcommand{\arraystretch}{1.8}	\begin{table}[ht]
		\begin{center}
			\resizebox{16cm}{!}{
				\begin{tabular}{ |c|c|c|c|} 
					\hline
					Reid's recipe& $H^{-2}(\Psi(\chi^!))$& $H^{-1}(\Psi(\chi^!))$& $H^{0}(\Psi(\chi^!))$ \\ 
					\hline
					$\chi$ marks a single divisor $E$& $0$& $0$& $\mathcal{L}^{-1}_{\chi}\otimes\mathcal{O}_E$ \\ 
					\hline
					$\chi$ marks a single curve $C$& $0$& $0$& $\mathcal{L}^{-1}_{\chi}\otimes\mathcal{O}_C$ \\ 
					\hline
					\pbox{13cm}{$\chi$ marks a chain of divisors\\ starting at $E$ and terminating at $F$}& $0$& $\mathcal{L}^{-1}_{\chi}(-E-F)\otimes\mathcal{O}_Z$& $0$ \\ 
					\hline
					\pbox{13cm}{$\chi$ marks three chains of divisors, starting at \\ $E_x$, $E_y$ and $E_z$ and meeting at a divisor $P$}& $0$& $\mathcal{L}^{-1}_{\chi}(-E_x-E_y-E_z)\otimes\mathcal{O}_{\mathcal{V}_Z}$& $0$ \\ 
					\hline
				%	$\chi=\chi_0$ & $w_{ZF_2}$& $w_{ZF_1}(ZF_2)$& $0$ \\ 	\hline
			\end{tabular}}
			\caption{Images of $\chi\otimes\mathcal{O}_0$}
		\end{center}
\end{table}}
\label{LogvMain}	
\end{thm}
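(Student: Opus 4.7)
The plan is to compute $\Psi(\chi^!)$ directly via the Fourier--Mukai formula
$\mathbf{R}p_*(q^*(-)\overset{\mathbf{L}}{\otimes}\mathcal{O}_{\mathcal{Z}})$ applied to a $G$-equivariant locally free resolution of $\chi^!$, then to determine the cohomology sheaves chart-by-chart on the toric cover of $Y$ and assemble them using Reid's recipe.

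First I would replace $\chi^!$ by its $G$-equivariant Koszul resolution
\[
0 \to \chi\otimes\det V\otimes\mathcal{O}_{\mathbb{C}^3} \to \chi\otimes\wedge^2 V\otimes\mathcal{O}_{\mathbb{C}^3} \to \chi\otimes V\otimes\mathcal{O}_{\mathbb{C}^3} \to \chi\otimes\mathcal{O}_{\mathbb{C}^3} \to \chi^! \to 0,
\]
where $V = V_x\oplus V_y\oplus V_z$ decomposes into the one-dimensional eigenspaces with characters $\chi_x, \chi_y, \chi_z$. Because $\Psi(\psi\otimes\mathcal{O}_{\mathbb{C}^3}) = \mathcal{L}_\psi^{\vee}$ for every irreducible $\psi$, applying $\Psi$ term by term yields a four-term complex of tautological line bundles on $Y$, concentrated in cohomological degrees $[-3,0]$, whose differentials are the tautological multiplication-by-coordinate maps inherited from the defining equations of $\mathcal{Z}$. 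The bound $H^i(\Psi(\chi^!)) = 0$ for $i \notin \{0,-1,-2\}$ then reduces to showing that the outermost Koszul differential is injective on stalks of $Y$, which is straightforward toric book-keeping since it is multiplication by a monomial that is non-vanishing generically on every chart.

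Next, for each basic triangle $T \subset \Sigma$ the corresponding toric chart $U_T \cong \mathbb{C}^3$ carries coordinates $u, v, w$ realized as $G$-invariant monomials in $x, y, z$, and each tautological line bundle $\mathcal{L}_\psi$ is canonically trivialized on $U_T$ by the monomial produced by Nakamura's algorithm (see \cite{CR}). In this trivialization the above four-term complex becomes an explicit complex of free $\mathbb{C}[u,v,w]$-modules whose differentials are matrices of monomials controlled by the character twists $\chi \otimes \chi_{x/y/z}$. Its cohomology on each $U_T$ is a purely combinatorial calculation depending only on the pattern of monomial labels attached to the vertices of $T$. Reid's recipe is precisely the combinatorial record of this pattern: $\chi$ is assigned to the interior lattice points and edges where these labels force the Koszul differentials to drop rank. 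This dictionary is what translates the four possible recipe outputs into the four rows of the table.

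The bulk of the proof is then a case analysis organized by this output. In cases~(1)--(2) the local cohomology is non-zero only on charts meeting the divisor $E$ or curve $C$, forces $H^{-1} = H^{-2} = 0$, and matches $\mathcal{L}_\chi^{-1}\otimes\mathcal{O}_E$ or $\mathcal{L}_\chi^{-1}\otimes\mathcal{O}_C$ after comparing trivializations. In cases~(3)--(4) the Koszul complex is locally exact in degree $0$ but acquires an $H^{-1}$ supported along the chain (or $Y$-configuration); the twists by $-E-F$ and $-E_x-E_y-E_z$ arise from the vanishing of the monomial labels at the endpoint charts, where the chain terminates. The main obstacle, and the technical core of the argument, is the global assembly in cases~(3) and (4): gluing the local $H^{-1}$ contributions across adjacent charts into the structure sheaves $\mathcal{O}_Z$ and $\mathcal{O}_{\mathcal{V}_Z}$ with exactly the stated twists. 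This demands a \v{C}ech-type patching argument combined with the vanishing of $\mathrm{Ext}^1$ between the relevant tautological bundles to rule out spurious extensions, and a careful combinatorial bookkeeping at every interior vertex of $\Sigma$ where several triangles meet --- which is where the bulk of the effort in \cite{CCL} concentrates.
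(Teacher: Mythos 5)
First, a point of comparison: this theorem is not proved in the paper at all --- it is imported verbatim from \cite{CCL} (with the isolated-singularity case treated earlier in \cite{CL}), so there is no internal proof to measure your attempt against; the relevant benchmark is the substantial argument of \cite{CCL} itself. Measured against that, your outline does identify the correct starting point: the $G$-equivariant Koszul resolution of $\chi^!$, the identity $\Psi(\psi\otimes\mathcal{O}_{\mathbb{C}^3})\simeq\mathcal{L}_\psi^{\vee}$ applied term by term to produce a four-term complex of tautological line bundles in degrees $[-3,0]$, and the observation that injectivity of the leftmost Koszul differential (multiplication by a regular section into a locally free sheaf) kills $H^{-3}$. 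This is indeed how the computation is set up in \cite{CL} and \cite{CCL}, and the reduction to toric charts trivializing the $\mathcal{L}_\psi$ via Nakamura's monomial bases is the right framework.

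However, as a proof the proposal has a genuine gap: everything that makes the theorem a theorem is deferred. The assertions that $H^{-2}$ always vanishes, that exactly one of $H^{0}$ and $H^{-1}$ survives (the purity dichotomy), that the surviving cohomology sheaf is supported precisely on the divisor, curve, chain, or $Y$-shaped configuration that Reid's recipe attaches to $\chi$, and that it carries the specific twist $\mathcal{L}_\chi^{-1}\otimes\mathcal{O}_E$, $\mathcal{L}_\chi^{-1}(-E-F)\otimes\mathcal{O}_Z$, and so on, are all presented as outcomes of a ``purely combinatorial calculation'' and a ``\v{C}ech-type patching argument'' that are never carried out; the sentence claiming that cases (1)--(2) ``force $H^{-1}=H^{-2}=0$'' is a restatement of the conclusion, not an argument. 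The chart-by-chart rank analysis of the monomial matrices, the verification that the local answers glue to the stated sheaves with no spurious extensions, and the bookkeeping at interior vertices where several triangles meet constitute essentially the entire content of \cite{CCL}. A correct write-up would either supply these steps or, as the paper does, simply cite \cite{CCL}; as it stands your text is a plausible plan of attack rather than a proof.
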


\begin{rmk}
		Apriori, each object $\Psi(\chi^!)$ is an abstract complex in $D^b(Y)$. It follows from Theorem \ref{CCLthm}  that for every nontrivial character $\chi$ the object $\Psi(\chi^!)$ is a pure sheaf (i.e. some shift of a coherent sheaf).
\end{rmk}

\section{Main result}

We now turn to the question of bounding the parameter $\mathcal{B}_0$ (see Definition \ref{ShareDefn}).  
The following result provides sharp universal bounds.  

\begin{thm}
One has
\(
0.25\leq \mathcal{B}_0 \leq 1,
\)
and both the upper and lower bounds are attained.
\label{MainThm}	
\end{thm}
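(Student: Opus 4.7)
The plan is to use Theorem~\ref{CCLthm} to recast $\mathcal{B}_0$ as a combinatorial count. Write $n_i$ for the number of nontrivial characters $\chi$ falling in case $i$ of the CCL table ($i = 1, 2, 3, 4$), so that $n_1 + n_2 + n_3 + n_4 = r - 1$ and $\mathcal{B}_0 = (n_1 + n_2)/(r - 1)$. The upper bound $\mathcal{B}_0 \leq 1$ is tautological. For its sharpness I would exhibit a small group whose Reid recipe produces no chains or Y-shapes. The natural candidate is $G = \tfrac{1}{3}(1, 1, 1)$: here $Y = G\operatorname{-}\mathrm{Hilb}(\mathbb{C}^3) \simeq \mathrm{Tot}(\mathcal{O}_{\mathbb{P}^2}(-3))$ has a single compact exceptional divisor, so by Theorem~\ref{CCLthm} each of the two nontrivial characters must lie in case~1 or case~2 (cases 3 and 4 requiring at least two compact divisors), giving $\mathcal{B}_0 = 1$.

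For the lower bound $\mathcal{B}_0 \geq 1/4$, I would argue combinatorially via Reid's recipe on the triangulation $\Sigma$ of the junior simplex. The relevant structural inputs are: (a) each case~3 character marks a chain of $k \geq 2$ compact exceptional divisors joined by $k - 1$ compact curves, so contributes $2k - 1 \geq 3$ marked features; (b) each case~4 character marks a Y-shape with $1 + a + b + c \geq 4$ divisors and $a + b + c \geq 3$ curves; (c) every compact exceptional divisor carries exactly one character label; (d) Pick's theorem, the Euler formula for the planar triangulation, and the identity $T = r$ giving the number of top-dimensional cones of $\Sigma$ all impose rigid constraints. Summing labeled divisors and curves by type yields identities linking $n_1, n_2, n_3, n_4$ to the numbers $D$ of compact divisors and $C$ of compact curves of the resolution, and combining these with the triangulation constraints I expect to extract an inequality of the form $3(n_1 + n_2) \geq n_3 + n_4$, equivalent to $\mathcal{B}_0 \geq 1/4$. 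Sharpness of the lower bound is then witnessed by the family of Example~\ref{example}: setting $k = 2$ in the $(k-1)/(2k)$ family produces $\mathcal{B}_0 = 1/4$.

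The main obstacle is the combinatorial bookkeeping in the lower bound. While the local structure of chains and Y-shapes is clear from Theorem~\ref{CCLthm}, extracting the sharp constant $1/4$ requires carefully balancing (i) the "cost" in divisors and curves incurred by each type of character, (ii) the Pick/Euler constraints on the triangulation, and (iii) the identity $T = r$. A crude application of these ingredients tends to produce a weaker or stronger bound that does not match the true sharp constant, so the triangulation structure must be exploited precisely. The fact that nontrivial infinite families saturate $\mathcal{B}_0 = 1/4$ confirms that the inequality is genuinely tight and signals that the combinatorial content of Reid's recipe must be used in full to close the gap.
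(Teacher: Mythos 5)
Your proposal has two genuine problems. First, the upper-bound sharpness example is wrong. For $G=\tfrac{1}{3}(1,1,1)$ one does \emph{not} get $\mathcal{B}_0=1$: your claim that cases 3 and 4 of Theorem~\ref{CCLthm} require at least two compact divisors fails for case 4, because there the three chains start at the \emph{non-compact} coordinate divisors $E_x,E_y,E_z$ and may meet immediately at a single compact divisor $P$. That is exactly what happens for $\tfrac{1}{3}(1,1,1)$: the character $\chi_1$ marks the three interior edges joining the corners of the junior simplex to its barycentre (the unique interior vertex, i.e.\ the exceptional $\mathbb{P}^2$), so $\Psi(\chi_1^!)$ is concentrated in degree $-1$ (it is $\Omega^1_{\mathbb{P}^2}(1)[1]$ up to twist), while only $\chi_2$, which marks the divisor, lands in degree $0$. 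Hence $\mathcal{B}_0=\tfrac12$ for this group. The paper instead uses $G=\mathbb{Z}/2\mathbb{Z}\times\mathbb{Z}/2\mathbb{Z}$, whose triangulation has three interior edges each marked by a distinct character as a single curve (case 2), giving $\mathcal{B}_0=1$.

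Second, your lower bound is a plan rather than a proof: you list Pick's theorem, the Euler formula and incidence identities and then state that you ``expect'' to extract $3(n_1+n_2)\geq n_3+n_4$, while conceding that a crude application does not yield the sharp constant. The missing idea is much simpler than the bookkeeping you describe. Since the triangulation of $\triangle$ consists of exactly $r$ basic triangles (as $[N:L^{\vee}]=r$), counting edge--triangle incidences gives at most $\tfrac{3r-3}{2}=\tfrac{3(r-1)}{2}$ interior edges. Every character with $H^{-1}(\Psi(\chi^!))\neq 0$ marks at least two interior edges, and distinct characters mark disjoint sets of edges, so there are at most $\tfrac{3(r-1)}{4}$ such characters, whence $\mathcal{B}_0\geq 1-\tfrac{3}{4}=\tfrac14$. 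Your choice of $\tfrac{1}{5}(1,1,3)$ (the $k=2$ member of the $(k-1)/(2k)$ family) for sharpness of the lower bound does agree with the paper.
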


\begin{proof}
Since $G\operatorname{-}\mathrm{Hilb}(\mathbb{C}^3)$ is a smooth toric variety, every triangle in the triangulation of the junior simplex $\triangle$ corresponding to $G\operatorname{-}\mathrm{Hilb}(\mathbb{C}^3)$ is \emph{basic}, i.e. the pyramid over any such triangle has normalized volume ($N$-volume) $1$ with respect to the lattice $N=L^{\vee} + \sum_{g\in G}\mathbb{Z}\cdot v_g$. Consequently the triangulation decomposes the pyramid with apex at the origin, $\mathcal{O}$, and base $\triangle$ into basic pyramids of $N$-volume $1$, so the number of basic triangles equals the $N$-normalized volume
\[
\mathrm{Vol}_N\big(\operatorname{conv}(0,\triangle)\big).
\]
In our setup one has $[N:L^\vee]=|G|=r$, and hence \(\mathrm{Vol}_N(\operatorname{conv}(0,\triangle))=r\). Thus the triangulation consists of \(r\) basic triangles.
	
Moreover, every interior edge (i.e. an edge not lying on the boundary of $\triangle$) is adjacent to two basic triangles, and the number of such edges is at most $\dfrac{3r-3}{2}$, since at least three edges belong to the boundary. By the classification in Theorem~\ref{CCLthm}, each of the $r-1$ nontrivial characters $\chi$ with $H^{-1}(\Psi(\chi^!))\neq 0$ marks at least two interior edges. This yields the lower bound
	\[
	\mathcal{B}_0\geq 1-\dfrac{3r-3}{2\cdot2(r-1)}=\dfrac{1}{4}.
	\]
	
	For sharpness of this bound, consider the group $G=\mathbb{Z}/5\mathbb{Z}$ with $\nu_1=\dfrac{1}{5}(1,1,3)$. The partition of the junior simplex into basic triangles corresponding to $G\operatorname{-}\mbox{Hilb}(\mathbb{C}^3)$ is shown in Figure~\ref{JuniorSimplex5}. From Theorem~\ref{CCLthm} one obtains 
	\[
	\mathcal{B}_0=1-\dfrac{3}{4}=\dfrac{1}{4}.
	\]
	
	We now verify the upper bound. Let $\varphi:\mathbb{Z}/2\mathbb{Z}\times\mathbb{Z}/2\mathbb{Z}\hookrightarrow SL_3(\mathbb{C})$ be defined by
	\[
	\varphi(1,0)=\begin{pmatrix}
		-1 &  0 &  0 \\
		0 &  1  &  0 \\
		0 &  0  & -1
	\end{pmatrix}, 
	\quad 
	\varphi(0,1)=\begin{pmatrix}
		1 &  0 &  0 \\
		0 & -1 &  0 \\
		0 &  0 & -1
	\end{pmatrix}.
	\]
	It follows from Theorem~\ref{CCLthm} that $\mathcal{B}_0=1-\dfrac{0}{3}=1$ (see Figure~\ref{JuniorSimplex2x2}).
\end{proof}

\begin{figure}[htbp!]
	\begin{center}
		
		\begin{tikzpicture}[scale=1]
			\draw (0,0) -- (6,0) -- (3,5.2) -- (0,0) -- cycle;
			\draw (3,5.2) -- (9/5,5.2/5);
			\draw (0,0) -- (9/5,5.2/5);
			\draw (6,0) -- (9/5,5.2/5);
			\draw (3,5.2) -- (18/5,10.4/5);
			\draw (0,0) -- (18/5,10.4/5);
			\draw (6,0) -- (18/5,10.4/5);
			\draw (9/5,5.2/5) -- (18/5,10.4/5);
			\node at (9/5,5.2/5) {$\bullet$};
			\node at (18/5,10.4/5) {$\bullet$};	
			\node at (1.1,0.9) {$\mathsmaller{\chi_1}$};
			\node at (2.8,1.85) {$\mathsmaller{\chi_1}$};
			\node at (2.1,3) {$\mathsmaller{\chi_2}$};
			\node at (3.2,0.4) {$\mathsmaller{\chi_2}$};
			\node at (3.2,3) {$\mathsmaller{\chi_4}$};
			\node at (4.5,1) {$\mathsmaller{\chi_4}$};
			\node at (-0.2,-0.2) {$e_z$};
			\node at (6.2,-0.2) {$e_y$};
			\node at (3,5.4) {$e_x$};	
		\end{tikzpicture}

		\caption{$\Sigma$ fan and character labels for  $G=\dfrac{1}{5}(1,1,3)$}
		\label{JuniorSimplex5}
	\end{center}
\end{figure}
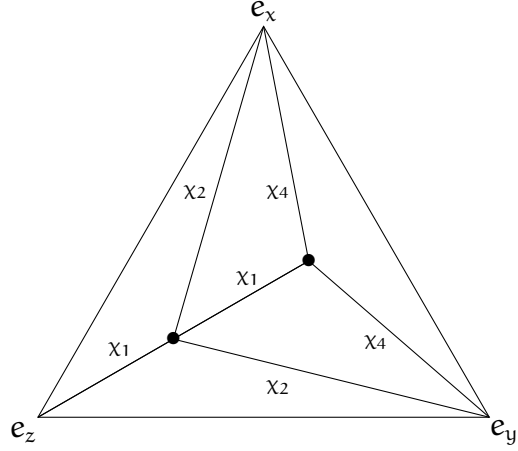

\begin{figure}[htbp!]
	\begin{center}
		
		\begin{tikzpicture}[scale=1]
			\draw (0,0) -- (6,0) -- (3,5.2) -- (0,0) -- cycle;
			\draw (4.5,2.6) -- (1.5,2.6);
			\draw (3,0) -- (1.5,2.6);
			\draw (4.5,2.6) -- (3,0);
		
			\node at (3,0) {$\bullet$};
			\node at (4.5,2.6) {$\bullet$};	
			\node at (1.5,2.6) {$\bullet$};	
			\node at (2,1) {$\mathsmaller{\chi_{11}}$};

			\node at (3,2.8) {$\mathsmaller{\chi_{10}}$};
			\node at (4.1,1) {$\mathsmaller{\chi_{01}}$};
			\node at (-0.2,-0.2) {$e_z$};
			\node at (6.2,-0.2) {$e_y$};
			\node at (3,5.4) {$e_x$};	
		\end{tikzpicture}

		\caption{$\Sigma$ fan and character labels for  $G=\mathbb{Z}/2\mathbb{Z}\times\mathbb{Z}/2\mathbb{Z}$}
		\label{JuniorSimplex2x2}
	\end{center}
\end{figure}
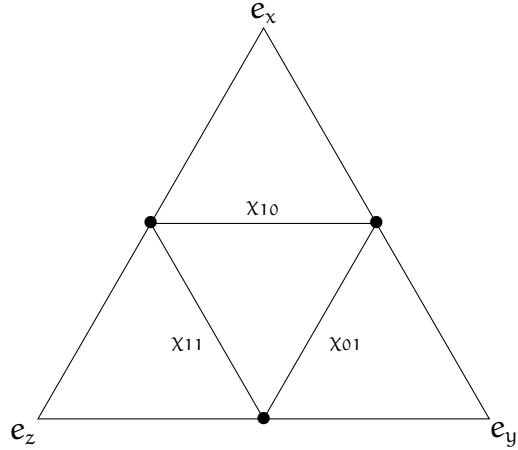

We conclude this section by presenting two illustrative families of resolutions together with a few straightforward but useful observations.

\begin{ex}
Let $G=\mathbb{Z}/2k\mathbb{Z}$ and $\widetilde{G}=\mathbb{Z}/(2k+1)\mathbb{Z}$. 
The corresponding lattices are
\[
N \;=\; L^{\vee} + \sum_{g\in G}\mathbb{Z}\cdot v_g, 
\qquad 
\widetilde{N} \;=\; L^{\vee} + \sum_{g\in \widetilde{G}}\mathbb{Z}\cdot v_g,
\]
where $v_g$ denotes the vector associated to $g$ in the junior simplex. 
In particular, we have that
\[
\nu_1=\frac{1}{2k}(1,1,2k-2)\in N, 
\qquad 
\widetilde{\nu}_1=\frac{1}{2k+1}(1,1,2k-1)\in \widetilde{N}.
\]
In each case, $k$ edges are marked with the character $\chi_1$, while the remaining labels are distributed as follows:
\begin{itemize}
  \item if $r=2k$, then for each $1\leq i\leq k-1$, there are two edges marked with $\chi_{2i}$;
  \item if $r=2k+1$, then for each $0\leq i\leq k-1$, there are two edges marked with $\chi_{2i+1}$ 
  (therefore, the total number of edges marked with $\chi_1$ is $k+2$).
\end{itemize}
Applying Theorem~\ref{CCLthm}, one obtains
\[
\mathcal{B}_0=\frac{k-1}{2k-1} 
\quad \text{and} \quad
\mathcal{B}_0=\frac{k-1}{2k},
\]
respectively.
\label{example}
\end{ex}

		\begin{figure}[htbp!]
			\begin{center}
				
				\begin{tikzpicture}[scale=1]
					\draw (0,0) -- (6,0) -- (3,5.2) -- (0,0) -- cycle;
					\draw (0,0) -- (4.5,2.6);
					\draw (6,0) -- (1.5,5.2/6);
					\draw (6,0) -- (3,5.2/3);
					\draw (3,5.2) -- (1.5,5.2/6);
					\draw (3,5.2) -- (3,5.2/3);
					\node at (4.5,2.6) {$\bullet$};	
					\node at (1.5,5.2/6) {$\bullet$};
					\node at (1,5.2/6) {$\mathsmaller{\chi_1}$};
					\node at (2.3,1.6) {$\mathsmaller{\chi_1}$};
					\node at (3.7,2.4) {$\mathsmaller{\chi_1}$};
					\node at (3.25,3.5) {$\mathsmaller{\chi_2}$};
					\node at (4.15,1.3) {$\mathsmaller{\chi_2}$};
					\node at (2.65,3.5) {$\mathsmaller{\chi_4}$};
					\node at (3.2,0.7) {$\mathsmaller{\chi_4}$};
					\node at (3,5.2/3) {$\bullet$};	
					\node at (-0.2,-0.2) {$e_z$};
					\node at (6.2,-0.2) {$e_y$};
					\node at (3,5.4) {$e_x$};	
				\end{tikzpicture}
				\begin{tikzpicture}[scale=1]
					\draw (0,0) -- (6,0) -- (3,5.2) -- (0,0) -- cycle;
					\draw (0,0) -- (27/7,15.6/7);
					\draw (6,0) -- (9/7,5.2/7);
					\draw (6,0) -- (18/7,10.4/7);
					\draw (6,0) -- (27/7,15.6/7);
					\draw (3,5.2) -- (9/7,5.2/7);
					\draw (3,5.2) -- (18/7,10.4/7);
					\draw (3,5.2) -- (27/7,15.6/7);
					\node at (9/7,5.2/7) {$\bullet$};
					\node at (18/7,10.4/7) {$\bullet$};	
					\node at (27/7,15.6/7) {$\bullet$};	
					\node at (0.8,5.2/6-0.17) {$\mathsmaller{\chi_1}$};
					\node at (2,1.4) {$\mathsmaller{\chi_1}$};
					\node at (3.3,2.11) {$\mathsmaller{\chi_1}$};
					\node at (3.3,3.5) {$\mathsmaller{\chi_1}$};
					\node at (4.37,1.3) {$\mathsmaller{\chi_1}$};
					\node at (2.6,3.1) {$\mathsmaller{\chi_3}$};
					\node at (3.7,0.777) {$\mathsmaller{\chi_3}$};
					\node at (1.85,2.7) {$\mathsmaller{\chi_5}$};
					\node at (2.7,0.3) {$\mathsmaller{\chi_5}$};
					\node at (-0.2,-0.2) {$e_z$};
					\node at (6.2,-0.2) {$e_y$};
					\node at (3,5.4) {$e_x$};	
				\end{tikzpicture}
				
				\caption{$\Sigma$ fans and character labels for $G=\dfrac{1}{6}(1,1,4)$ and $\widetilde{G}=\dfrac{1}{7}(1,1,5)$}
				\label{JuniorSimplex67}
			\end{center}
		\end{figure}
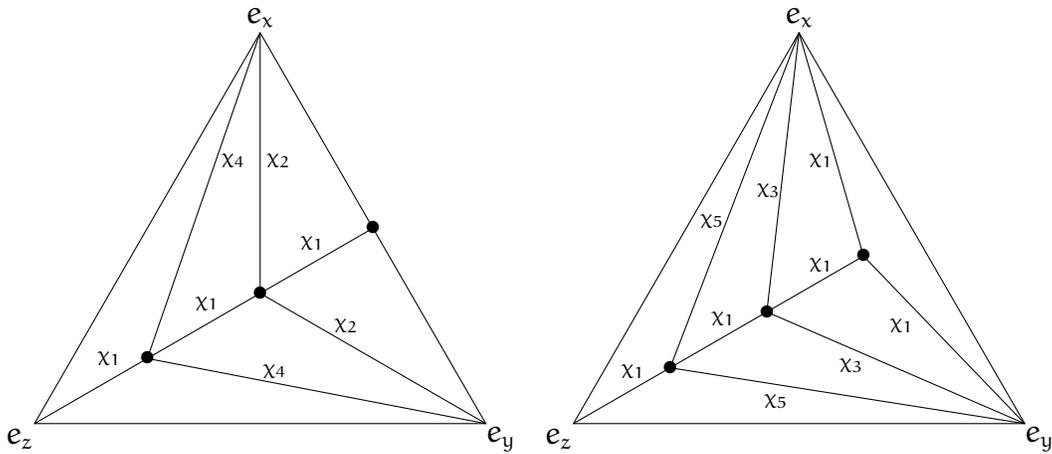

\begin{rmk}
\begin{enumerate}
    \item The number of internal edges equals $\tfrac{3r-3}{2}$ if and only if each side of the junior simplex is itself an edge of the triangulation. This condition is equivalent to requiring $\gcd(a,r)=\gcd(b,r)=1$.

    \item By Theorem~\ref{CCLthm}, $H^{0}(\Psi(\chi^!))$ is nonzero whenever $\chi$ marks at most one internal edge. Consequently, the number of internal edges marked with characters $\chi$ such that $H^{0}(\Psi(\chi^!))\neq 0$ is bounded above by $r-1$. In particular, if the number of internal edges exceeds $r-1$, it follows that $\mathcal{B}_0$ is strictly less than $1$. 
\end{enumerate} 
\end{rmk}

%\begin{figure}[htbp!]
%	\begin{center}
		
%		\begin{tikzpicture}[scale=1]
%			\draw (0,0) -- (6,0) -- (3,5.2) -- (0,0) -- cycle;
			
%			\node at (12/3,10.4/3) {$\bullet$};
%			\node at (15/3,5.2/3) {$\bullet$};
			
%			\node at (1.5,0) {$\bullet$};
%			\node at (3,0) {$\bullet$};
%			\node at (4.5,0) {$\bullet$};
			
%			\node at (24/12,20.8/12) {$\bullet$};
%			\node at (42/12,20.8/12) {$\bullet$};
%			\node at (30/12,41.6/12) {$\bullet$};
		
%			\node at (-0.2,-0.2) {$e_z$};
%			\node at (6.2,-0.2) {$e_y$};
%			\node at (3,5.4) {$e_x$};	
%		\end{tikzpicture}

%		\caption{$\Sigma$ fan and character labels for  $G=\dfrac{1}{5}(1,1,3)$}
%		\label{JuniorSimplex5}
%	\end{center}
%\end{figure}

\section{Questionnaire}

We conclude with a collection of natural questions and possible directions for further research.

Let $G$ be a cyclic abelian group with generator $1 \in G$. 
Any embedding $G \hookrightarrow SL_3(\mathbb{C})$ is, up to conjugation, of the form 
\[
\varphi_{a,b} : G \hookrightarrow SL_3(\mathbb{C}), \quad 
\varphi_{a,b}(1) = \mathrm{diag}(\zeta, \zeta^a, \zeta^b),
\]
where $\zeta = e^{2\pi i / r}$ and $r = |G|$.  
Set 
\[
\mathcal{S} := \{ (a,b) \in \mathbb{Z}_{>0}^2 \mid a+b = r-1 \},
\]
and define the function
\[
\mathfrak{G} : \mathcal{S} \longrightarrow [0.25,1] \cap \frac{\mathbb{Z}}{r-1}, 
\qquad \mathfrak{G}(a,b) := \mathcal{B}_0^{a,b},
\]
where $\mathcal{B}_0^{a,b}$ denotes the proportion of nontrivial characters $\chi$ for which $\Psi(\chi^!)$ is concentrated in degree $0$, computed with respect to the embedding $\varphi_{a,b}$.

\begin{rmk}
Observe that $\mathfrak{G}(a,b) = \mathfrak{G}(b,a)$.
\end{rmk}

This setup naturally leads to the following questions:
\begin{enumerate}
	\item What are the maximal and minimal values of $\mathfrak{G}$? 
	\item What is the image (range) of $\mathfrak{G}$? 
	\item How are the values of $\mathfrak{G}$ distributed within this range?
\end{enumerate}

\bibliographystyle{alpha}

\end{document}